\DeclareMathAlphabet{\mathpzc}{OT1}{pzc}{m}{it}
\newtheoremstyle{lemma}{\topsep}{\topsep}
	{\itshape}
	{}
	{\bfseries}
	{.}
	{\newline}
	{\thmname{#1}\thmnumber{ #2}\thmnote{ #3}}	
\theoremstyle{lemma}
	\newtheorem{lemma}{Lemma}[section]
\newtheoremstyle{definition}{\topsep}{\topsep}
	{}
	{}
	{\bfseries}
	{.}
	{\newline}
	{\thmname{#1}\thmnumber{ #2}\thmnote{ #3}}	
\theoremstyle{definition}
	\newtheorem{definition}[lemma]{Definition}
	\newtheorem{warning}[lemma]{Warning}
\newcommand{\he}{\ensuremath{\alpha}}
\newcommand{\N}{\ensuremath{\mathbb{N}}}
\newcommand{\R}{\ensuremath{\mathbb{R}}}
\newcommand{\HM}{\ensuremath{\mathcal{H}}}
\newcommand{\B}{\ensuremath{\mathcal{B}}}
\newcommand{\V}{\ensuremath{\mathcal{V}}}
\newcommand{\M}{\ensuremath{\mathcal{M}}}
\DeclarePairedDelimiter\abs{\lvert}{\rvert}
\newcommand{\dd}{\ensuremath{\,\text{d}}}
\newcommand{\dHM}{\ensuremath{\,\text{d}\HM^{\he}}}
\newcommand{\dHMs}{\ensuremath{\,\text{d}\HM^{s}}}
\DeclareMathOperator{\dist}{dist}
\DeclareMathOperator{\diam}{diam}
\renewcommand{\phi}{\varphi}
\renewcommand{\epsilon}{\varepsilon}
\begin{document}

\title{For which positive $p$ is the integral Menger curvature $\M_{p}$ finite for all simple polygons?}
\author{\href{mailto:sebastian.scholtes@rwth-aachen.de}{Sebastian Scholtes}}
\date{November 24, 2011}
\maketitle

\begin{abstract}
	In this brief note\footnote{which is designated to be an addendum to \cite{Scholtes2011d}}
	we show that the integral Menger curvature $\M_{p}$ is finite for all simple polygons 
	if and only if $p\in (0,3)$. For the intermediate energies $\mathcal{I}_{p}$ and $\mathcal{U}_{p}$ we obtain
	the analogous result for $p\in (0,2)$ and $p\in (0,1)$, respectively.
\end{abstract}
\centerline{\small Mathematics Subject Classification (2000): 28A75; 53A04}
\bigskip

It is well known, and in fact, by finding similar triangles, pretty easy to prove, that any simple polygon that is not a straight line 
has infinite integral Menger curvature $\M_{p}$ for $p\geq 3$, cf.
\cite[Example after Lemma 1]{Strzelecki2007a} and \cite[after Theorem 1.2]{Strzelecki2011b} for similar energies.
This note investigates the opposite question, namely:

\begin{center}
	\textbf{
		Is there a $p\in (0,\infty)$, such that all simple polygons have finite integral Menger curvature $\M_{p}$?
	}
\end{center}

The answer to this question is:

\begin{center}
	\textbf{
		Yes, for all $p\in (0,3)$.
	}
\end{center}

Here the integral Menger curvature $\M_{p}(X)$, $p\in (0,\infty)$
of a set $X\subset \R^{n}$ is defined by
\begin{align*}
	\M_{p}(X)\vcentcolon=\int_{X}\int_{X}\int_{X}\kappa^{p}(x,y,z)\dHM_{X}(x)\dHM_{X}(y)\dHM_{X}(z),
\end{align*}
where the integrand $\kappa$ is the mapping
\begin{align*}
	\kappa:X^{3}\to \R,\,(x,y,z)\mapsto
	\begin{cases}
		r^{-1}(x,y,z),& x\not=y\not=z\not=x,\\
		0,&\text{else},
	\end{cases}
\end{align*}
and $r(x,y,z)$ is the radius of the circumcircle of the three points $x,y$ and $z$ -- if the points
are on a straight line we set $r(x,y,z)=\infty$, so that in this case $\kappa(x,y,z)=0$.\\

In a similar manner we can define the energies
\begin{align*}
	\mathcal{I}_{p}(X)\vcentcolon=\int_{X}\int_{X}\kappa_{i}^{p}(x,y)\dHM_{X}(x)\dHM_{X}(y)\quad\text{and}\quad
	\mathcal{U}_{p}(X)\vcentcolon=\int_{X}\kappa_{G}^{p}(x)\dHM_{X}(x),
\end{align*}
where
\begin{align*}
	\kappa_{i}(x,y)=\sup_{z\in X}\kappa(x,y,z)\quad\text{and}\quad
	\kappa_{G}(x)=\sup_{y,z\in X}\kappa(x,y,z).
\end{align*}
We also answer the analogous question for the intermediate energies $\mathcal{I}_{p}$ and $\mathcal{U}_{p}$, where
the appropriate parameter range is $p\in (0,2)$ and $p\in (0,1)$, respectively. To prove our result we show that
it is enough to control the energy of all polygons $E_{\phi}$ with two edges of length $1$ and angle $\phi\in (0,2\pi)$ and that 
these energies are controlled by the energy of $E_{\pi/2}$.\\

\textbf{Acknowledgement}\\
	The author wishes to thank H. von der Mosel for asking about this problem, reading the present note and improving it by 
	making several suggestions.

\begin{definition}[(The set $E_{\phi}$)]
	For $\phi\in \R$ we define
	\begin{align*}
		E_{\phi}\vcentcolon=[[0,1)\times\{0\}]\cup (\cos(\phi),\sin(\phi))[0,1).
	\end{align*}
\end{definition}

\begin{lemma}[(Estimate of $\kappa$ for $E_{\phi}$)]\label{estimateforkappaEphi}
	Let $\phi\in (0,2\pi)$. Then there is a constant $c(\phi)>0$, such that for all
	\begin{align*}
			x=(\xi,0), y=(\eta,0)\in (0,1]\times\{0\}\quad\text{and}\quad 
			z=\zeta(\cos(\phi),\sin(\phi))\in(\cos(\phi),\sin(\phi))(0,1].
	\end{align*}
	we have
	\begin{align*}
		\kappa(x,y,z)
			\leq c(\phi) \frac{2\zeta}{(\xi^{2}+\zeta^{2})^{1/2}(\eta^{2}+\zeta^{2})^{1/2}}.
	\end{align*}
\end{lemma}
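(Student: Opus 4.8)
The plan is to start from the elementary formula for the circumradius of a triangle, $r(x,y,z)=\frac{\abs{x-y}\,\abs{y-z}\,\abs{z-x}}{4A}$, where $A$ is the area of the triangle with vertices $x,y,z$; equivalently $\kappa(x,y,z)=\frac{4A}{\abs{x-y}\,\abs{y-z}\,\abs{z-x}}$. The first step is to compute each of the four ingredients explicitly in terms of $\xi,\eta,\zeta$ and $\phi$. Since $x=(\xi,0)$ and $y=(\eta,0)$ lie on the horizontal axis, the base $\abs{x-y}=\abs{\xi-\eta}$ and the corresponding height is the distance of $z$ from that axis, namely $\zeta\abs{\sin(\phi)}$, so that $A=\tfrac12\abs{\xi-\eta}\,\zeta\,\abs{\sin(\phi)}$. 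Expanding the remaining two side lengths and using $\cos^{2}(\phi)+\sin^{2}(\phi)=1$ gives $\abs{x-z}=(\xi^{2}-2\xi\zeta\cos(\phi)+\zeta^{2})^{1/2}$ and $\abs{y-z}=(\eta^{2}-2\eta\zeta\cos(\phi)+\zeta^{2})^{1/2}$.

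The decisive observation is that the factor $\abs{\xi-\eta}$ occurs both in $A$ and in $\abs{x-y}$ and therefore cancels, leaving the closed form
\begin{align*}
	\kappa(x,y,z)=\frac{2\zeta\abs{\sin(\phi)}}{(\xi^{2}-2\xi\zeta\cos(\phi)+\zeta^{2})^{1/2}(\eta^{2}-2\eta\zeta\cos(\phi)+\zeta^{2})^{1/2}}.
\end{align*}
Comparing this with the claimed bound, the task reduces to controlling the two denominator factors against $\xi^{2}+\zeta^{2}$ and $\eta^{2}+\zeta^{2}$ by a constant independent of the point positions, after which the factor $\abs{\sin(\phi)}$ is simply absorbed into $c(\phi)$.

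For this uniform control I would use the arithmetic--geometric mean inequality $2\xi\zeta\leq\xi^{2}+\zeta^{2}$ and split into cases on the sign of $\cos(\phi)$. If $\cos(\phi)\leq 0$, then $-2\xi\zeta\cos(\phi)\geq 0$, so the denominator factor is already $\geq\xi^{2}+\zeta^{2}$ and the corresponding ratio is at most $1$. If $\cos(\phi)>0$, then $\xi^{2}-2\xi\zeta\cos(\phi)+\zeta^{2}\geq(\xi^{2}+\zeta^{2})(1-\cos(\phi))$, which bounds the ratio by $(1-\cos(\phi))^{-1}$; this is finite since $\phi\in(0,2\pi)$ forces $\cos(\phi)<1$. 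The same estimate applies verbatim to the $\eta$-factor. Combining both estimates yields the inequality with $c(\phi)=\abs{\sin(\phi)}\max\{1,(1-\cos(\phi))^{-1}\}$.

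The argument is essentially pure bookkeeping once the cancellation of $\abs{\xi-\eta}$ is exploited, so I do not expect a serious obstacle. The only point requiring a one-line remark is the degenerate case $\phi=\pi$: there $z$ lies on the horizontal axis, the three points are collinear, $\kappa=0$, and the candidate constant above vanishes because $\sin(\pi)=0$. Since the Lemma asks merely for some $c(\phi)>0$, one replaces $c(\phi)$ by any strictly positive upper bound (for instance $\max\{1,c(\phi)\}$), and the inequality $0\leq c(\phi)\cdot(\text{positive})$ holds trivially.
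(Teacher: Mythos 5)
Your proof is correct and follows essentially the same route as the paper: both reduce to the explicit formula $\kappa(x,y,z)=2\zeta\abs{\sin(\phi)}\,(\xi^{2}-2\xi\zeta\cos(\phi)+\zeta^{2})^{-1/2}(\eta^{2}-2\eta\zeta\cos(\phi)+\zeta^{2})^{-1/2}$ (you via the circumradius formula $r=\frac{abc}{4A}$ with cancellation of $\abs{\xi-\eta}$, the paper directly via $\kappa=\frac{2\dist(z,L_{x,y})}{\abs{x-z}\abs{y-z}}$, which is the same identity), and then use the identical case split on the sign of $\cos(\phi)$ with the same estimate $\xi^{2}-2\xi\zeta\cos(\phi)+\zeta^{2}\geq(1-\cos(\phi))(\xi^{2}+\zeta^{2})$. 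The only cosmetic difference is that you handle $\phi\in(\pi,2\pi)$ and $\phi=\pi$ explicitly through $\abs{\sin(\phi)}$ and a trivial-constant remark, where the paper appeals to isometry invariance to restrict to $\phi\in(0,\pi)$.
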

\begin{proof}
		As $\kappa$ is invariant under isometries we only need to consider the case $\phi\in (0,\pi)$.
		We compute
		\begin{align*}
			\MoveEqLeft
			\kappa(x,y,z)=\frac{2\dist(z,L_{x,y})}{\abs{x-z}\abs{y-z}}\\
			&=\frac{2\sin(\phi)\zeta}{([\xi-\zeta\cos(\phi)]^{2}+[\zeta\sin(\phi)]^{2})^{1/2}([\eta-\zeta\cos(\phi)]^{2}+[\zeta\sin(\phi)]^{2})^{1/2}}\\
			&=\frac{2\sin(\phi)\zeta}{(\xi^{2}-2\xi\zeta\cos(\phi)+\zeta^{2})^{1/2}(\eta^{2}-2\eta\zeta\cos(\phi)+\zeta^{2})^{1/2}}.
		\end{align*}
		If $\phi\in [\pi/2,\pi)$ we have
		\begin{align*}
			\xi^{2}-2\xi\zeta\cos(\phi)+\zeta^{2}\geq \xi^{2}+\zeta^{2}
		\end{align*}
		and otherwise, i.e. $\phi\in (0,\pi/2)$
		\begin{align*}
			\MoveEqLeft
			\xi^{2}-2\xi\zeta\cos(\phi)+\zeta^{2}=[1-\cos(\phi)](\xi^{2}+\zeta^{2})
			+\underbrace{\cos(\phi)}_{\geq 0}\underbrace{[\xi^{2}-2\xi\zeta+\zeta^{2}]}_{\geq 0}\\
			&\geq [1-\cos(\phi)](\xi^{2}+\zeta^{2}).
		\end{align*}
\end{proof}

\begin{lemma}[(Estimate of $\mathcal{E}_{p}(E_{\phi})$ in terms of $\mathcal{E}_{p}(E_{\pi/2})$)]\label{inequalityEpi/2}
	Let $\phi\in\R$. Then there is a constant $c(\phi)>0$, such that for all $p\in (0,\infty)$, 
	$\mathcal{E}_{p}\in\{\mathcal{U}_{p},\mathcal{I}_{p},\mathcal{M}_{p}\}$ we have
	\begin{align*}
		\mathcal{E}_{p}(E_{\phi})\leq c(\phi)^{p} \mathcal{E}_{p}(E_{\pi/2}).
	\end{align*}
\end{lemma}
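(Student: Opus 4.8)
The plan is to reduce all three energies simultaneously to a single pointwise comparison of $\kappa$ on $E_{\phi}$ against $\kappa$ on $E_{\pi/2}$, transported by the natural parametrization. Since $E_{\phi}$ depends only on $\phi$ modulo $2\pi$ and degenerates to a segment, on which every energy vanishes, when $\phi\in\pi\Z$, I may assume $\phi\in(0,2\pi)\setminus\{\pi\}$, where Lemma \ref{estimateforkappaEphi} is available. Let $T\colon E_{\phi}\to E_{\pi/2}$ be the bijection that fixes the first edge $[0,1)\times\{0\}$ pointwise and sends $\zeta(\cos(\phi),\sin(\phi))$ on the second edge to $(0,\zeta)$; on each edge $T$ is the restriction of an isometry of $\R^{2}$, so $T$ preserves $\HM^{1}$.

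The heart of the matter is the pointwise estimate
\[
	\kappa(x,y,z)\leq c(\phi)\,\kappa(Tx,Ty,Tz)\qquad\text{for all } x,y,z\in E_{\phi},
\]
with $c(\phi)$ the constant of Lemma \ref{estimateforkappaEphi}. Both sides vanish when $x,y,z$ lie on a common edge, so it suffices to treat the two nondegenerate configurations, namely two points on one edge and one on the other. If two of the points lie on the first edge and one on the second, then, by the symmetry of $\kappa$ in its arguments, Lemma \ref{estimateforkappaEphi} applies directly, and one checks that its right-hand side is precisely $c(\phi)\,\kappa$ evaluated at the images $(\xi,0),(\eta,0),(0,\zeta)$ in $E_{\pi/2}$, i.e. at $Tx,Ty,Tz$. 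The remaining configuration, one point on the first edge and two on the second, I would reduce to the previous one by first applying the reflection of $\R^{2}$ across the bisector of the angle at the origin, which is an isometry fixing $E_{\phi}$ and interchanging its edges; invariance of $\kappa$ under isometries then puts us back in the first case, and a second reflection, this time across the diagonal $\{y=x\}$ fixing $E_{\pi/2}$ and swapping its axes, identifies the resulting point configuration with $(Tx,Ty,Tz)$.

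Once this pointwise bound is established, each of the three energies follows by monotonicity together with the measure-preserving change of variables $T$. For $\M_{p}$ I raise the bound to the $p$-th power and integrate, using that the threefold product map $T\times T\times T$ pushes $\HM^{1}\otimes\HM^{1}\otimes\HM^{1}$ on $E_{\phi}^{3}$ to the corresponding product measure on $E_{\pi/2}^{3}$, which yields $\M_{p}(E_{\phi})\leq c(\phi)^{p}\M_{p}(E_{\pi/2})$. For $\mathcal{I}_{p}$ and $\mathcal{U}_{p}$ the same pointwise bound first gives, after taking suprema over the bijectively transported third, respectively second and third, variables, the estimates $\kappa_{i}(x,y)\leq c(\phi)\,\kappa_{i}(Tx,Ty)$ and $\kappa_{G}(x)\leq c(\phi)\,\kappa_{G}(Tx)$; raising to the $p$-th power and integrating against the $T$-invariant measure then closes the argument.

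I expect the only genuine obstacle to be the second nondegenerate configuration: one must line up the two reflection symmetries, that of $E_{\phi}$ and that of $E_{\pi/2}$, so that a single constant $c(\phi)$ and the single map $T$ serve uniformly, rather than being forced into a case distinction that would spoil the clean factor $c(\phi)^{p}$. Everything else is monotonicity of the integral and of the supremum, together with the isometry-invariance of $\kappa$ already exploited in Lemma \ref{estimateforkappaEphi}.
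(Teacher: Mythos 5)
Your proposal is correct and, in all essentials, coincides with the paper's own proof: the same reduction to $\phi\in(0,2\pi)\setminus\{\pi\}$ (the excluded angles give straight configurations of zero energy), the same transport map (the paper's $f$ is exactly your $T$), and the same pointwise bound $\kappa(x,y,z)\leq c(\phi)\,\kappa(Tx,Ty,Tz)$ extracted from Lemma \ref{estimateforkappaEphi} together with isometry invariance. Your explicit verification that $T$ intertwines the reflection of $E_{\phi}$ across the angle bisector with the reflection of $E_{\pi/2}$ across the diagonal is precisely the detail the paper compresses into the phrase ``as $\kappa$ is invariant under isometries we can without loss of generality assume the situation of Lemma \ref{estimateforkappaEphi}''; you spell out correctly why a single constant $c(\phi)$ serves both nondegenerate configurations. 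The one place you genuinely deviate is the integration step. The paper feeds the pointwise bound into its appendix Lemma \ref{estimatechangeofvariablesmultiple}, using only that $f$ is bi-Lipschitz; since the Lipschitz constant of $f^{-1}$ exceeds $1$ whenever $\cos\phi<0$ (it equals $\sqrt{1-\cos\phi}$ on mixed pairs of edges), that route actually produces an extra $p$-independent factor $(\operatorname{Lip}f^{-1})^{3}$ in front of $c(\phi)^{p}$ --- harmless for the finiteness application, but strictly weaker than the clean bound $c(\phi)^{p}\,\mathcal{E}_{p}(E_{\pi/2})$ as stated. Your observation that $T$ is a piecewise isometry of the two edges, hence pushes the restriction of $\HM^{1}$ to $E_{\phi}$ exactly onto that of $E_{\pi/2}$, eliminates this factor and yields the lemma literally as stated, so your variant is a small sharpening. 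The only point you elide is the measurability of the inner integrals (respectively the transported suprema for $\kappa_{i}$ and $\kappa_{G}$) needed to iterate the one-variable change of variables; this is exactly what Step 1 of Lemma \ref{estimatechangeofvariablesmultiple} supplies via lower semicontinuity (see the paper's Warning that plain Borel measurability would not suffice), and your argument needs the same input. Since $\kappa^{p}$ is lower semicontinuous, this is a routine patch rather than a gap.
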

\begin{proof}
	Without loss of generality we might assume $\phi\in[0,2\pi]$ and as 
	$\mathcal{E}_{p}(E_{0})=\mathcal{E}_{p}(E_{2\pi})=\mathcal{E}_{p}(E_{\pi})=0$ for all $p\in (0,\infty)$
	we might as well assume $\phi\in (0,2\pi)\backslash\{\pi\}$. Let us denote
	\begin{align*}
		E_{\phi}^{1}\vcentcolon=(0,1)\times\{0\}\quad\text{and}\quad 
		E_{\phi}^{2}\vcentcolon=(\cos(\phi),\sin(\phi))(0,1).
	\end{align*}
	Define
	\begin{align*}
		f:E_{\phi}\to E_{\pi/2},\, x\mapsto
		\begin{cases}
			x,&x\in [0,1]\times\{0\},\\
			(0,x_{2}/\sin(\phi)),&x\in E_{\phi}^{2}.
		\end{cases}
	\end{align*}
	As $\kappa$ is invariant under isometries we can without loss of generality assume the situation of Lemma \ref{estimateforkappaEphi}
	and hence have
	\begin{align}\label{inequalitycurvatures}
		\kappa(x,y,z)\leq c(\phi)\kappa(f(x),f(y),f(z)),
	\end{align}
	if $\# \{x,y,z\in E_{\phi}^{1}\}\geq 1$ and $\# \{x,y,z\in E_{\phi}^{2}\}\geq 1$. Since
	$\kappa(x,y,z)=0$ for $x,y,z\in E_{\phi}^{1}\cup\{0\}$ or $x,y,z\in E_{\phi}^{2}\cup\{0\}$ we have (\ref{inequalitycurvatures}) 
	for all $x,y,z\in E_{\phi}$ and therefore by Lemma \ref{estimatechangeofvariablesmultiple}, 
	note that $f$ is bi-Lipschitz, proven the proposition.
\end{proof}

\begin{lemma}[(Range of $p$ where $\mathcal{E}_{p}(E_{\pi/2})$ is finite)]\label{finiteEpi/2}
	We have
	\begin{align*}
		\mathcal{U}_{p}(E_{\pi/2})&<\infty\quad\text{if and only if }\quad p\in (0,1),\\
		\mathcal{I}_{p}(E_{\pi/2})&<\infty\quad\text{if and only if }\quad p\in (0,2),\\
		\mathcal{M}_{p}(E_{\pi/2})&<\infty\quad\text{if and only if }\quad p\in (0,3).\\
	\end{align*}
\end{lemma}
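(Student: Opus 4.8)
The plan is to compute each energy explicitly for the simplest configuration $E_{\pi/2}$, which consists of two unit segments meeting at a right angle: one along the positive $x$-axis and one along the positive $y$-axis. By symmetry and the structure of $\kappa$, the only triples that contribute nonzero curvature are those with points on both segments. The key simplification is that for $E_{\pi/2}$ the angle $\phi=\pi/2$ gives $\cos(\phi)=0$, $\sin(\phi)=1$, so the estimate of Lemma~\ref{estimateforkappaEphi} becomes an exact formula: for $x=(\xi,0)$, $y=(\eta,0)$ and $z=(0,\zeta)$ we have
\begin{align*}
	\kappa(x,y,z)=\frac{2\zeta}{(\xi^{2}+\zeta^{2})^{1/2}(\eta^{2}+\zeta^{2})^{1/2}}.
\end{align*}
This explicit integrand is what makes the borderline exponents computable.

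\medskip

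First I would treat $\M_{p}$. The triple integral over $E_{\pi/2}^{3}$ splits, up to the symmetry factor counting which segment each point lies on, into the dominant contribution where two points sit on one segment and the third on the other (the all-on-one-segment contributions vanish). Using the formula above, the task reduces to deciding convergence of
\begin{align*}
	\int_{0}^{1}\int_{0}^{1}\int_{0}^{1}\frac{\zeta^{p}}{(\xi^{2}+\zeta^{2})^{p/2}(\eta^{2}+\zeta^{2})^{p/2}}\dd\xi\dd\eta\dd\zeta.
\end{align*}
Performing the $\xi$- and $\eta$-integrals first (they separate), each inner integral behaves like a constant times $\zeta^{1-p}$ when $p>1$ and stays bounded when $p<1$; the delicate region is $\zeta\to 0$. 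Collecting the powers of $\zeta$, the outer integrand scales like $\zeta^{p}\cdot\zeta^{1-p}\cdot\zeta^{1-p}=\zeta^{2-p}$ for $p>1$, which is integrable near $0$ exactly when $2-p>-1$, i.e.\ $p<3$. For $\M_{p}$ the heuristic of finding similar triangles near the corner already shows divergence at $p=3$, so the sharp threshold is $p\in(0,3)$.

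\medskip

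For $\mathcal{I}_{p}$ and $\mathcal{U}_{p}$ I would carry out the same asymptotic bookkeeping after taking suprema. For $\mathcal{I}_{p}$ one fixes $x=(\xi,0)$ and $y=(\eta,0)$ and maximizes over $z=(0,\zeta)$ on the opposite segment (the supremum over the same segment being zero); optimizing the one-variable function $\zeta\mapsto \kappa(x,y,z)$ gives $\kappa_{i}(x,y)$ comparable to a power of $\xi,\eta$, and the double integral then converges precisely for $p<2$. For $\mathcal{U}_{p}$ one fixes only $x$ and takes the supremum over both $y$ and $z$; the maximizing configuration yields $\kappa_{G}$ comparable to the reciprocal of the distance from $x$ to the corner, so the single integral $\int_{0}^{1}\kappa_{G}^{p}$ behaves like $\int_{0}^{1}\xi^{-p}\dd\xi$, finite exactly for $p<1$. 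In every case the threshold is detected by the near-corner behavior, and divergence at the endpoint follows from the standard similar-triangles argument cited at the start of the note.

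\medskip

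The main obstacle is the suprema in $\mathcal{I}_{p}$ and $\mathcal{U}_{p}$: computing $\kappa_{i}$ and $\kappa_{G}$ exactly requires solving a genuine optimization problem in the free variable(s), and one must verify that the maximizer lies in the admissible range $(0,1]$ rather than at an endpoint, so that the claimed power-law comparison is two-sided. Once these suprema are pinned down to within constants, the remaining integrals are elementary power-counting, and the only care needed is to match the divergence at the upper endpoints with the known similar-triangle examples to confirm the ``only if'' direction.
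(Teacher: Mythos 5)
Your argument is sound, but it takes a genuinely different route from the paper: the paper proves Lemma \ref{finiteEpi/2} by citation alone, invoking Theorem 1.1 and Proposition 1.2 of \cite{Scholtes2011d} (the parent paper to which this note is an addendum), whereas you recompute the three thresholds from scratch. Your computation does go through, and the obstacle you flag dissolves on inspection: for $\phi=\pi/2$ the formula $\kappa(x,y,z)=2\zeta(\xi^{2}+\zeta^{2})^{-1/2}(\eta^{2}+\zeta^{2})^{-1/2}$ is exact, the one-variable maximization over $\zeta$ has the interior critical point $\zeta^{*}=\sqrt{\xi\eta}\in(0,1]$, giving exactly $\kappa_{i}((\xi,0),(\eta,0))=2/(\xi+\eta)$; mixed pairs give exactly $\kappa_{i}((\xi,0),(0,\zeta))=2(\xi^{2}+\zeta^{2})^{-1/2}$ (a supremum approached as the third point tends to the corner, which is legitimate since only the supremum is integrated), and likewise $\kappa_{G}((\xi,0))=2/\xi$. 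Hence $\mathcal{I}_{p}$ and $\mathcal{U}_{p}$ are finite precisely for $p<2$ and $p<1$ by elementary power counting, and your bookkeeping $\zeta^{p}\cdot\zeta^{1-p}\cdot\zeta^{1-p}=\zeta^{2-p}$ for $\M_{p}$ is automatically two-sided because the integrand is exact (restricting to $\xi,\eta\leq\zeta$ gives the matching lower bound), so the divergence for $p\geq 3$, $p\geq 2$, $p\geq 1$ follows from the same computation and you do not actually need the similar-triangles example for the ``only if'' direction. What each approach buys: the citation keeps the note short and ties it to the parent paper's general tangency-based characterization, which the note reuses anyway in Lemma \ref{polygonsfiniteUp}; your computation makes the lemma self-contained, exhibits the exact suprema $\kappa_{i}$ and $\kappa_{G}$ near the corner, and settles both implications in one stroke --- at the modest cost of writing out the optimization and the mixed-segment cases, which you correctly identified as the only places requiring care.
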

\begin{proof}
	\cite[Theorem 1.1 and Proposition 1.2]{Scholtes2011d}
\end{proof}

\begin{lemma}[(Energy of polygons is determined by $E_{\phi}$)]\label{energyofpolygonsdeterminedbyEpi/2}
	Let $\phi\in\R$, fix $p\in (0,\infty)$ and $\mathcal{E}_{p}\in\{\mathcal{U}_{p},\mathcal{I}_{p},\mathcal{M}_{p}\}$, such that for all
	$\phi\in\R$ we have $\mathcal{E}_{p}(E_{\phi})<\infty$. Then if $P\subset\R^{n}$ is a simple polygon with finitely many vertices,
	we have $\mathcal{E}_{p}(P)<\infty$.
\end{lemma}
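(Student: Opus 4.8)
The plan is to localise the singularities of the integrand at the vertices of $P$ and to recognise each such local contribution as that of a corner $E_{\phi}$. Write $P=S_{1}\cup\dots\cup S_{N}$ as the union of its finitely many closed edges. Because $P$ is simple with finitely many vertices, two numbers are strictly positive: the minimal edge length $\ell>0$ and the minimal distance $d>0$ between two edges that do not share a vertex (simplicity makes such edges disjoint, finiteness makes the bound uniform). The whole argument rests on one elementary estimate: since $\dist(z,L_{x,y})\leq\min(\abs{x-z},\abs{y-z})$, the circumradius formula gives
\begin{align*}
	\kappa(x,y,z)=\frac{2\dist(z,L_{x,y})}{\abs{x-z}\abs{y-z}}\leq\frac{2}{\max(\abs{x-z},\abs{y-z})},
\end{align*}
and, taking as apex an endpoint of the longest side, $\kappa(x,y,z)\leq 2/\diam\{x,y,z\}$. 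Hence $\kappa$ can only be large on triples of small diameter, and fixing $\rho<\tfrac12\min(\ell,d)$ any three points with $\diam\{x,y,z\}<\rho$ lie in a ball that meets at most two edges, these necessarily sharing a vertex. In particular every such cluster is contained in $S_{a}\cup S_{b}$ for some adjacent pair of edges.

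I would now split each energy into a bounded \emph{spread out} part and a \emph{clustered} part, writing $\diam$ for $\diam\{x,y,z\}$. For $\M_{p}$ one has $\M_{p}(P)=\int_{\{\diam\geq\rho\}}\kappa^{p}+\int_{\{\diam<\rho\}}\kappa^{p}$; the first integral is at most $(2/\rho)^{p}\HM^{1}(P)^{3}<\infty$, while the domain of the second is covered by the finitely many sets $(S_{a}\cup S_{b})^{3}$ with $S_{a},S_{b}$ adjacent, so that $\int_{\{\diam<\rho\}}\kappa^{p}\leq\sum_{\{a,b\}}\M_{p}(S_{a}\cup S_{b})$. For the supremum energies the same dichotomy applies to the integration variables, the free supremum variables being controlled by the crude bound whenever they leave a $\rho$-ball: for $\mathcal{I}_{p}$ one splits according to $\abs{x-y}\geq\rho$ or $<\rho$ and, in the latter case, restricts the inner supremum over $z$ to $S_{a}\cup S_{b}$ at the cost of the bounded term $(2/\rho)^{p}$; for $\U_{p}$ one notes that $\kappa_{G}(x)\leq 2/\rho$ unless $x$ lies within $\rho$ of an adjacent edge, in which case the relevant $y,z$ again lie on $S_{a}\cup S_{b}$. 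In every case one arrives at an estimate of the form $\E_{p}(P)\leq C+c\sum_{\{a,b\}}\E_{p}(S_{a}\cup S_{b})$, with finitely many adjacent pairs.

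It then remains to see that $\E_{p}(S_{a}\cup S_{b})<\infty$ for each adjacent pair. The set $S_{a}\cup S_{b}$ is isometric to two segments of lengths $\ell_{a},\ell_{b}$ issuing from the origin at the angle $\phi_{v}\in(0,2\pi)$ of the shared vertex $v$, hence to a piecewise linear, bi-Lipschitz rescaled copy of the relevant piece of $E_{\phi_{v}}$. Exactly as in the proof of Lemma \ref{inequalityEpi/2} — via the pointwise comparison furnished by Lemma \ref{estimateforkappaEphi} and the change-of-variables Lemma \ref{estimatechangeofvariablesmultiple} — this yields $\E_{p}(S_{a}\cup S_{b})\leq c'\,\E_{p}(E_{\phi_{v}})$, which is finite by hypothesis. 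Summing over the finitely many vertices gives $\E_{p}(P)<\infty$.

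The main obstacle is the localisation carried out in the first paragraph: one must check, uniformly over the whole polygon, that any configuration on which $\kappa$ (or its supremum) is large is supported on a single pair of adjacent edges. This is precisely where simplicity and finiteness enter through $\ell,d>0$, and where the elementary bound $\kappa\leq 2/\diam\{x,y,z\}$ does the work; note that a sharp angle at $v$ may make the corner region extend far along the two edges, but it still lives on $S_{a}\cup S_{b}$, so no genuinely new difficulty arises. The supremum energies require the additional, but routine, observation that the free variables may be confined to these two edges at the cost of the bounded error term, after which the three cases are handled by the same bookkeeping.
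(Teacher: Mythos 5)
Your localisation claim --- ``any three points with $\diam\{x,y,z\}<\rho$ lie in a ball that meets at most two edges, these necessarily sharing a vertex'' --- is false in exactly one case, and that case is not dismissible: when $P$ is a closed triangle, whose three edges are \emph{pairwise} adjacent. Your constant $d$ is the minimal distance between edges that do \emph{not} share a vertex, and for a triangle there are no such pairs, so $\rho$ is constrained only by $\ell$. Concretely, take the thin triangle with vertices $(0,0)$, $(1,0)$, $(1/2,\epsilon)$: here $\ell\approx 1/2$, so $\rho$ may be of order $1/4$, yet any ball of radius $\rho>\epsilon$ centred at the waist meets all three edges, and a triple with one point on each edge and diameter $<\rho$ lies in no $(S_a\cup S_b)^3$ with $S_a,S_b$ adjacent. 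Thus your covering of $\{\diam<\rho\}$, and with it the clustered-part estimates for all three energies, breaks down --- and the triangle case is genuinely needed, since the lemma is subsequently applied to non-degenerate \emph{closed} polygons. The repair is cheap and stays inside your framework: three distinct closed edges of a simple polygon have empty common intersection, so by compactness $d'\vcentcolon=\inf\{\diam\{x,y,z\}:x\in S_a,\,y\in S_b,\,z\in S_c,\ a,b,c\text{ pairwise distinct}\}>0$; requiring in addition $\rho<d'$ (or, equivalently, disposing of three-distinct-edge triples directly via $\kappa\leq 2/\diam\leq 2/d'$) restores the covering, since a cluster of diameter $<\rho$ then touches at most two edges, which are equal or adjacent.

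It is worth seeing how the paper sidesteps this pitfall: it decomposes not by edges but by small vertex neighbourhoods $E_i=P\cap B_{\epsilon}(x_i)$ together with the middle segments $Y_i=X_i\backslash[E_i\cup E_{i+1}]$, and its constant $d_{1}=\min_i\dist(Y_i,\mathrm{cl}(P\backslash X_i))/4$ measures the distance from each middle segment to \emph{the whole rest of the polygon} --- a quantity which, unlike your $d$, does see the waist of a thin triangle; triples meeting some $Y_i$ are bounded by $d_1^{-1}$ (collinearity killing the remaining subcase), triples in distinct corner pieces by $d_2^{-1}$, and only same-corner triples are referred to $E_{\phi_i}$ by scaling and Lemma \ref{inequalityEpi/2}. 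Your route, once patched by $d'$, is sound and in fact somewhat more transparent: the single bound $\kappa\leq 2/\diam\{x,y,z\}$ replaces the paper's case analysis, at the price of one extra geometric constant. One further small correction: since the arms $\ell_a,\ell_b$ of $S_a\cup S_b$ need not be equal, $S_a\cup S_b$ is not a rescaled copy of $E_{\phi_v}$ but a \emph{subset} of one (rescale by $\max(\ell_a,\ell_b)$); so instead of rerunning Lemmas \ref{estimateforkappaEphi} and \ref{estimatechangeofvariablesmultiple} verbatim, invoke the monotonicity of $\mathcal{U}_p,\mathcal{I}_p,\mathcal{M}_p$ under set inclusion (immediate from the definitions, as the suprema and integration domains only grow) together with the scaling identity $\mathcal{E}_p(\lambda X)=\lambda^{k-p}\mathcal{E}_p(X)$, $k=1,2,3$, and then Lemma \ref{inequalityEpi/2} for $E_{\phi_v}$ itself.
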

\begin{proof}
	Let $P\subset\R^{n}$ be a simple polygon with $N\geq 3$ vertices $x_{i}$, $i=1,\ldots,N-1$, 
	and denote by $\lambda>0$ the length of the shortest edge. 
	Then there is $\epsilon_{0}\in (0,\lambda/4)$, 
	such that for all $\epsilon\in (0,\epsilon_{0})$ the set $E_{i}\vcentcolon=P\cap B_{\epsilon}(x_{i})$ is some
	rescaled, rotated and translated version of a set $E_{\phi_{i}}$, because else the polygon would not be simple. 
	By $X_{i}$ we denote the edges of $P$ connecting $x_{i}$ and $x_{i+1}$. Then the $N-1$ sets 
	$Y_{i}\vcentcolon=X_{i}\backslash [E_{i}\cup E_{i+1}]$ are compact and $Y_{i}$ is
	disjoint to $Z_{i}\vcentcolon=\textrm{cl}(P\backslash X_{i})$, which is also compact. Therefore
	\begin{align*}
		d_{1}\vcentcolon=\min_{i=1,\ldots, N-1}\{\dist(Y_{i},Z_{i})\}/4>0,
	\end{align*}
	and for all $y\in Y_{i}$ we have
	\begin{align}\label{kappaformiddlesegment}
		\kappa(y,a,b)\leq d_{1}^{-1}\quad\text{if }a\in Z_{i}\text{ or }b\in Z_{i}.
	\end{align}
	As $P\backslash Z_{i}\subset X_{i}$, which is contained in a straight line, we even have (\ref{kappaformiddlesegment})
	for all $a,b\in P$. Now it remains to deal with the situation $y,a,b\not\in\bigcup_{i=1}^{N-1}Y_{i}$, since we can
	permute $y,a,b$ as arguments of $\kappa$ at will. This leads us to the two cases where either $y,a,b\in E_{i}$ or, 
	without loss of generality, $y\in E_{i}$ and $a\in E_{j}$ for $i\not= j$. If we denote
	\begin{align*}
		d_{2}\vcentcolon=\min_{\substack{i,j=1,\ldots,N-1\\i\not=j}}\{\dist(\textrm{cl}(E_{i}),\textrm{cl}(E_{j}))\}/4>0
	\end{align*}
	then the second case yields
	\begin{align*}
		\kappa(y,a,b)\leq d_{2}^{-1}
	\end{align*}
	and the first case is already controlled by Lemma \ref{inequalityEpi/2}, that is 
	$\mathcal{E}_{p}(E_{i})=\alpha_{i}\mathcal{E}_{p}(E_{\phi_{i}})$, where $\alpha_{i}\geq 0$ is the scaling constant.
	Now we can put all the cases together to estimate -- depending on which energy $\mathcal{E}_{p}$ we chose --
	\begin{align*}
		\MoveEqLeft
		\mathcal{U}_{p}(P)=\int_{\bigcup_{i=1}^{N-1}Y_{i}}\kappa_{G}^{p}(x)\dHM(x)
		+\int_{\bigcup_{i=1}^{N}E_{i}}\kappa_{G}^{p}(x)\dHM(x)\\
		&\leq \HM^{1}(P)d_{1}^{-p}+\int_{\bigcup_{i=1}^{N}E_{i}}\kappa_{G}^{p}(x)\dHM(x)
	\end{align*}
	with
	\begin{align*}
		\MoveEqLeft
		\int_{E_{i}}\kappa_{G}^{p}(x)\dHM(x)
		\leq \int_{E_{i}}\Big[\sup_{(y,z)\in \bigcup_{j=1}^{N-1} Y_{j}\times P}\kappa^{p}(x,y,z)\\
		&+\sup_{(y,z)\in \bigcup_{j\not=i} E_{j}\times P}\kappa^{p}(x,y,z)
		+\sup_{(y,z)\in  E_{i}\times E_{i}}\kappa^{p}(x,y,z)\Big]\dHM(x)\\
		\leq &\HM^{1}(P)(d_{1}^{-p}+d_{2}^{-p})+\mathcal{U}_{p}(E_{i})
		\leq \HM^{1}(P)(d_{1}^{-p}+d_{2}^{-p})+\alpha_{i}c(\phi_{i})^{p}\mathcal{U}_{p}(E_{\pi/2})<\infty
	\end{align*}
	or
	\begin{align*}
		\MoveEqLeft[1]
		\mathcal{I}_{p}(P)=2 \int_{P}\int_{\bigcup_{l=1}^{N}Y_{l}}\kappa_{i}^{p}(x,y)\dHM(x)\dHM(y)\\
		&+\sum_{l\not= k}\int_{E_{l}}\int_{E_{k}}\kappa_{i}^{p}(x,y,z)\dHM(x)\dHM(y)
		+\sum_{l=1}^{N}\int_{E_{l}}\int_{E_{l}}\kappa_{i}^{p}(x,y,z)\dHM(x)\dHM(y)\\
		\leq&  [\HM^{1}(P)]^{2}(2d_{1}^{-p}+N^{2}d_{2}^{-p})+\sum_{l=1}^{N}\int_{E_{l}}\int_{E_{l}}\kappa_{i}^{p}(x,y,z)\dHM(x)\dHM(y),
	\end{align*}
	with
	\begin{align*}
		\MoveEqLeft[1]
		\int_{E_{l}}\int_{E_{l}}\kappa_{i}^{p}(x,y,z)\dHM(x)\dHM(y)\leq
		\int_{E_{l}}\int_{E_{l}}\sup_{z\in \bigcup_{j}Y_{j}}\kappa^{p}(x,y,z)\dHM(x)\dHM(y)\\
		&+\int_{E_{l}}\int_{E_{l}}\sup_{z\in \bigcup_{j\not= l}E_{j}}\kappa^{p}(x,y,z)\dHM(x)\dHM(y)+
		\int_{E_{l}}\int_{E_{l}}\sup_{z\in E_{l}}\kappa^{p}(x,y,z)\dHM(x)\dHM(y)\\
		\leq & [\HM^{1}(P)]^{2}(d_{1}^{-p}+d_{2}^{-p})+\mathcal{I}_{p}(E_{l})\leq 
		[\HM^{1}(P)]^{2}(d_{1}^{-p}+d_{2}^{-p})+\alpha_{l}c(\phi_{l})^{p}\mathcal{I}_{p}(E_{\pi/2})<\infty
	\end{align*}
	or
	\begin{align*}
		\MoveEqLeft
		\M_{p}(P)=3 \int_{P}\int_{P}\int_{\bigcup_{i=1}^{N}Y_{i}}\kappa^{p}(x,y,z)\dHM(x)\dHM(y)\dHM(z)\\
		&+\sum_{\#\{i,j,k\}\geq 2}\int_{E_{i}}\int_{E_{j}}\int_{E_{k}}\kappa^{p}(x,y,z)\dHM(x)\dHM(y)\dHM(z)
		+\sum_{i=1}^{N}\alpha_{i}\mathcal{M}_{p}(E_{\phi_{i}})\\
		\leq&  [\HM^{1}(P)]^{3}(3d_{1}^{-p}+N^{3}d_{2}^{-p})+\Big(\sum_{i=1}^{N}\alpha_{i}c(\phi_{i})^{p}\Big)\mathcal{M}_{p}(E_{\pi/2})
		<\infty.
	\end{align*}
\end{proof}

By $\mathcal{P}\subset \mathrm{Pot}(\R^{n})$ we denote the set of all simple polygons with finitely many vertices.

\begin{lemma}[(Polygons have finite $\mathcal{U}_{p}$ iff $p\in (0,1)$)]\label{polygonsfiniteUp}
	Let $p\in (0,\infty)$. The following are equivalent
	\begin{itemize}
		\item
			$p\in (0,1),$
		\item
			$\mathcal{U}_{p}(P)<\infty$ for all $P\in \mathcal{P},$
		\item
			there is a non-degenerate closed polygon $P$, such that $\mathcal{U}_{p}(P)<\infty$.
	\end{itemize}
\end{lemma}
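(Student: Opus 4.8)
The plan is to establish the three bullet points as equivalent by proving the cycle of implications (i)~$\Rightarrow$~(ii)~$\Rightarrow$~(iii)~$\Rightarrow$~(i), where I write (i), (ii), (iii) for the three items in the order listed. The first implication is a direct chaining of the preceding lemmas: if $p\in(0,1)$, then Lemma~\ref{finiteEpi/2} gives $\mathcal{U}_{p}(E_{\pi/2})<\infty$, hence Lemma~\ref{inequalityEpi/2} yields $\mathcal{U}_{p}(E_{\phi})\leq c(\phi)^{p}\mathcal{U}_{p}(E_{\pi/2})<\infty$ for every $\phi\in\R$, and feeding this into Lemma~\ref{energyofpolygonsdeterminedbyEpi/2} produces $\mathcal{U}_{p}(P)<\infty$ for all $P\in\mathcal{P}$, which is exactly (ii). The implication (ii)~$\Rightarrow$~(iii) is immediate, since any non-degenerate triangle is a non-degenerate closed polygon lying in $\mathcal{P}$, so (ii) applies to it and supplies the required $P$.

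The substance lies in (iii)~$\Rightarrow$~(i), which I would prove by contraposition: assuming $p\geq 1$, I claim that \emph{every} non-degenerate closed polygon $P$ satisfies $\mathcal{U}_{p}(P)=\infty$. The key structural fact is that $\mathcal{U}_{p}$ is monotone under inclusion, because $\kappa_{G,X}(x)=\sup_{y,z\in X}\kappa(x,y,z)$ is nondecreasing in $X$, so $S\subseteq P$ forces $\mathcal{U}_{p}(S)=\int_{S}\kappa_{G,S}^{p}\leq\int_{S}\kappa_{G,P}^{p}\leq\mathcal{U}_{p}(P)$. Since $P$ is non-degenerate it is not contained in a line, so some vertex has its two incident edges meeting at an angle $\phi\in(0,2\pi)\setminus\{\pi\}$; cutting off two short sub-segments of these edges emanating from that vertex isolates a subset $S\subseteq P$ that is a rescaled, rotated and translated copy of $E_{\phi}$. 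By monotonicity and the scaling behaviour of $\mathcal{U}_{p}$ (rescaling by $\beta>0$ preserves finiteness), it suffices to show $\mathcal{U}_{p}(E_{\phi})=\infty$.

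To obtain $\mathcal{U}_{p}(E_{\phi})=\infty$ for $p\geq 1$ I would exploit scale invariance of $\kappa$ to produce a pointwise lower bound on $\kappa_{G}$ along the horizontal edge. For $x=(\xi,0)$ with $\xi\in(0,1)$, test against $y=(\xi/2,0)$ and $z=(\xi/2)(\cos\phi,\sin\phi)$; these are $\xi$ times the fixed non-collinear triple $(1,0),(1/2,0),(1/2)(\cos\phi,\sin\phi)$, whose circumradius is some $r_{0}(\phi)>0$. Hence the circumradius of $x,y,z$ equals $\xi\,r_{0}(\phi)$ and $\kappa_{G}(x)\geq\kappa(x,y,z)=\xi^{-1}r_{0}(\phi)^{-1}$, so that $\mathcal{U}_{p}(E_{\phi})\geq r_{0}(\phi)^{-p}\int_{0}^{1}\xi^{-p}\dd\xi=\infty$ for every $p\geq 1$. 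This closes the contrapositive and hence the cycle.

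I expect the main obstacle to be the geometric bookkeeping in (iii)~$\Rightarrow$~(i): verifying that a non-degenerate closed polygon genuinely has a vertex with angle in $(0,2\pi)\setminus\{\pi\}$, that an $E_{\phi}$-shaped subset can be cleanly isolated there, and that the scaling constant is tracked so finiteness transfers correctly. The analytic core, namely the divergence of $\int_{0}^{1}\xi^{-p}\dd\xi$ for $p\geq 1$, is elementary once the $\xi^{-1}$ lower bound on $\kappa_{G}$ is in hand. As an alternative to the explicit lower bound, one could invoke the reverse of Lemma~\ref{estimateforkappaEphi}, whose denominator estimate $\xi^{2}-2\xi\zeta\cos\phi+\zeta^{2}\leq 2(\xi^{2}+\zeta^{2})$ is the easy half, to deduce $\mathcal{U}_{p}(E_{\pi/2})\leq c'(\phi)^{p}\mathcal{U}_{p}(E_{\phi})$ and then quote Lemma~\ref{finiteEpi/2}; I find the direct estimate cleaner and more self-contained.
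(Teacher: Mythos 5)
Your proof is correct, and while your (i)\,$\Rightarrow$\,(ii) chain (Lemma~\ref{finiteEpi/2} $\to$ Lemma~\ref{inequalityEpi/2} $\to$ Lemma~\ref{energyofpolygonsdeterminedbyEpi/2}) is exactly the paper's, you take a genuinely different route for the hard direction (iii)\,$\Rightarrow$\,(i). The paper disposes of it in one line by citing \cite[Theorem 1.1]{Scholtes2011d}: finiteness of $\mathcal{U}_{p}$ for $p\geq 1$ forces the existence of an approximate $1$-tangent at every point, whereas a vertex with angle $\phi\in(0,2\pi)\setminus\{\pi\}$ has none --- the same external machinery that already underlies Lemma~\ref{finiteEpi/2}. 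You instead argue self-containedly: monotonicity of $\mathcal{U}_{p}$ under inclusion (valid, since $\kappa_{G,X}$ is nondecreasing in $X$ and you integrate over the smaller set), the scaling law $\mathcal{U}_{p}(\beta X)=\beta^{1-p}\mathcal{U}_{p}(X)$, and the similar-triangles lower bound $\kappa_{G}((\xi,0))\geq (\xi\, r_{0}(\phi))^{-1}$ on $E_{\phi}$, which produces the divergent integral $\int_{0}^{1}\xi^{-p}\dd\xi$ for $p\geq 1$; all the pieces check out, including the non-collinearity of your test triple for $\phi\neq\pi$ and the existence of a vertex with angle in $(0,2\pi)\setminus\{\pi\}$ on any non-degenerate \emph{closed} polygon (all angles equal to $\pi$ cannot close up), a fact the paper also relies on. In effect you transplant the ``similar triangles'' argument that the paper mentions in its opening paragraph (for $\M_{p}$, $p\geq 3$) to $\mathcal{U}_{p}$ with $p\geq 1$, which makes this direction independent of the companion paper at the cost of some bookkeeping; the paper's tangent-based citation buys brevity and a more general principle (any set with a corner-like point has $\mathcal{U}_{p}=\infty$ for $p\geq 1$), and keeps the note aligned with the framework of \cite{Scholtes2011d}, on which the (i)\,$\Rightarrow$\,(ii) direction still depends through Lemma~\ref{finiteEpi/2} in both your version and the paper's.
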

\begin{proof}
	This is clear by Lemma \ref{finiteEpi/2} and Lemma \ref{energyofpolygonsdeterminedbyEpi/2} together with \cite[Theorem 1.1]{Scholtes2011d} and the
	information that any vertex of a polygon with angle in $(0,2\pi)\backslash \{\pi\}$ has no approximate $1$-tangent at this vertex.
\end{proof}

\begin{lemma}[(Polygons have finite $\mathcal{I}_{p}$ iff $p\in (0,2)$)]
 	Let $p\in (0,\infty)$. The following are equivalent
	\begin{itemize}
		\item
			$p\in (0,2),$
		\item
			$\mathcal{I}_{p}(P)<\infty$ for all $P\in \mathcal{P},$
		\item
			there is a non-degenerate closed polygon $P$, such that $\mathcal{I}_{p}(P)<\infty$.
	\end{itemize}
\end{lemma}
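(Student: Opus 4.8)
The plan is to establish the three-way equivalence exactly as in Lemma~\ref{polygonsfiniteUp}, only replacing the threshold $p\in(0,1)$ by $p\in(0,2)$, since this is the range produced by the middle line of Lemma~\ref{finiteEpi/2}. I would organize the argument as a cycle of implications, so that the role of each earlier lemma is isolated.

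First I would show that $p\in(0,2)$ implies $\mathcal{I}_{p}(P)<\infty$ for every $P\in\mathcal{P}$. Lemma~\ref{finiteEpi/2} gives $\mathcal{I}_{p}(E_{\pi/2})<\infty$ precisely when $p\in(0,2)$, and Lemma~\ref{inequalityEpi/2} then yields $\mathcal{I}_{p}(E_{\phi})\leq c(\phi)^{p}\mathcal{I}_{p}(E_{\pi/2})<\infty$ for every $\phi\in\R$. Thus the hypothesis of Lemma~\ref{energyofpolygonsdeterminedbyEpi/2} is satisfied and we conclude $\mathcal{I}_{p}(P)<\infty$ for all simple polygons with finitely many vertices. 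The step from ``finite for all $P\in\mathcal{P}$'' to ``finite for some non-degenerate closed polygon'' is trivial, as any triangle is such a polygon and lies in $\mathcal{P}$.

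The main obstacle is the remaining implication: deducing $p\in(0,2)$ from the mere existence of one non-degenerate closed polygon $P$ with $\mathcal{I}_{p}(P)<\infty$. Since $P$ is closed and non-degenerate it cannot lie on a single line, so it has at least one vertex whose interior angle belongs to $(0,2\pi)\backslash\{\pi\}$. At such a vertex the polygon has no approximate $1$-tangent, and by \cite[Theorem 1.1]{Scholtes2011d} this is exactly the obstruction forcing $\mathcal{I}_{p}$ to be infinite once $p$ reaches the threshold. Concretely, one may restrict both integrations in $\mathcal{I}_{p}(P)$ to a small ball around the vertex, where $P$ is a rescaled, rotated and translated copy of some $E_{\phi}$ with $\phi\neq\pi$; since the supremum defining $\kappa_{i}$ is taken over all of $P$, this bounds $\mathcal{I}_{p}(P)$ below by a positive multiple of $\mathcal{I}_{p}(E_{\phi})$. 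The two-sided comparison of circumradii underlying Lemma~\ref{estimateforkappaEphi} (the reverse inequalities $\xi^{2}-2\xi\zeta\cos(\phi)+\zeta^{2}\leq(1+\abs{\cos(\phi)})(\xi^{2}+\zeta^{2})$, and similarly for $\eta$) shows $\mathcal{I}_{p}(E_{\phi})$ is finite if and only if $\mathcal{I}_{p}(E_{\pi/2})$ is, hence infinite for $p\geq 2$ by Lemma~\ref{finiteEpi/2}. Therefore $\mathcal{I}_{p}(P)<\infty$ rules out $p\geq 2$ and leaves only $p\in(0,2)$, closing the cycle.

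I expect the only delicate point to be this final implication, and specifically the verification that the local lower bound near a genuine corner really reproduces the divergence of $\mathcal{I}_{p}(E_{\pi/2})$ for $p\geq 2$: the singularity of $\mathcal{I}_{p}(E_{\pi/2})$ at $p=2$ arises from arbitrarily small scales at the vertex, so restricting to a ball does not destroy it, but one should confirm that a non-right angle only alters the constant $c(\phi)$ and not the convergence behaviour. All remaining steps are immediate consequences of the lemmas already established.
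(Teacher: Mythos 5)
Your proposal is correct, and for the first two implications it coincides with the paper's argument (the paper's proof simply points to the proof of Lemma \ref{polygonsfiniteUp}, i.e.\ Lemma \ref{finiteEpi/2} plus Lemma \ref{inequalityEpi/2} plus Lemma \ref{energyofpolygonsdeterminedbyEpi/2}). Where you genuinely diverge is the implication from the existence of one non-degenerate closed polygon with finite energy back to $p\in(0,2)$: the paper argues softly, invoking \cite[Theorem 1.1]{Scholtes2011d} -- finiteness of $\mathcal{I}_{p}$ at or above the threshold would force an approximate $1$-tangent at every point, while a vertex with angle in $(0,2\pi)\backslash\{\pi\}$ admits none -- whereas your operative argument is quantitative: localise at a corner (valid, since restricting both integrals to $E_{i}\times E_{i}$ only shrinks $\mathcal{I}_{p}(P)$, and shrinking the supremum in $\kappa_{i}$ from $P$ to $E_{i}$ preserves the lower bound), use the scaling identity $\mathcal{I}_{p}(\alpha X)=\alpha^{2-p}\mathcal{I}_{p}(X)$, and compare with $E_{\pi/2}$, which has $\mathcal{I}_{p}(E_{\pi/2})=\infty$ for $p\geq 2$ by Lemma \ref{finiteEpi/2}. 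Your route needs two ingredients not stated in the paper, both of which you correctly identify and both of which are routine: the reverse pointwise estimate
\begin{align*}
	\kappa(x,y,z)\geq \frac{\sin(\phi)}{1+\abs{\cos(\phi)}}\,\kappa(f(x),f(y),f(z)),
\end{align*}
which follows from $\abs{2\xi\zeta\cos(\phi)}\leq \abs{\cos(\phi)}(\xi^{2}+\zeta^{2})$ exactly as you say, and the reverse of Lemma \ref{inequalityEpi/2}, obtained by running its proof with $f^{-1}$ (also bi-Lipschitz; note $f$ is arclength-preserving on each edge) and Lemma \ref{estimatechangeofvariablesmultiple}, whose lower-semicontinuity hypothesis $\kappa_{i}^{p}$ satisfies as a supremum of lower semicontinuous functions. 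What each approach buys: the paper's is two lines but leans on the external tangency theorem applied directly to $P$; yours is self-contained modulo the divergence half of Lemma \ref{finiteEpi/2} (itself imported from \cite[Proposition 1.2]{Scholtes2011d}) and makes the mechanism of divergence -- arbitrarily small scales at a genuine corner -- explicit. Your closing concern resolves itself precisely through the scaling identity: the ball radius enters only through the factor $\alpha^{2-p}>0$ and the angle only through the constant $c'(\phi)$, so neither affects convergence behaviour, including the borderline case $p=2$ where $\alpha^{2-p}=1$.
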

\begin{proof}
	See the proof of Lemma \ref{polygonsfiniteUp}.
\end{proof}

\begin{lemma}[(Polygons have finite $\M_{p}$ iff $p\in (0,3)$)]
	Let $p\in (0,\infty)$. The following are equivalent
	\begin{itemize}
		\item
			$p\in (0,3),$
		\item
			$\mathcal{M}_{p}(P)<\infty$ for all $P\in \mathcal{P},$
		\item
			there is a non-degenerate closed polygon $P$, such that $\mathcal{M}_{p}(P)<\infty$.
	\end{itemize}
\end{lemma}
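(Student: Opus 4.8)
The plan is to establish the three equivalences by the same cyclic scheme already used for $\mathcal{U}_{p}$ in Lemma \ref{polygonsfiniteUp}, merely replacing the threshold $1$ by $3$ and reading off the $\M_{p}$-row of the quoted results. Concretely I would prove ``first item $\Rightarrow$ second item $\Rightarrow$ third item $\Rightarrow$ first item''. The forward (positive) part rests on the model computation for $E_{\pi/2}$ together with the reduction of an arbitrary polygon to its corner pieces $E_{\phi_{i}}$, while the backward (negative) part rests on the absence of an approximate tangent at a genuine vertex.

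For the implication from the first to the second item, assume $p\in(0,3)$. By Lemma \ref{finiteEpi/2} we have $\M_{p}(E_{\pi/2})<\infty$, and Lemma \ref{inequalityEpi/2} then gives $\M_{p}(E_{\phi})\leq c(\phi)^{p}\M_{p}(E_{\pi/2})<\infty$ for every $\phi\in\R$. Thus the hypothesis of Lemma \ref{energyofpolygonsdeterminedbyEpi/2} is met for $\mathcal{E}_{p}=\M_{p}$, and that lemma yields $\M_{p}(P)<\infty$ for every simple polygon $P$ with finitely many vertices, i.e.\ for all $P\in\mathcal{P}$.

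The passage from the second to the third item is immediate, since any non-degenerate closed polygon --- a triangle being the simplest witness --- lies in $\mathcal{P}$, so finiteness on all of $\mathcal{P}$ produces such an example. For the closing implication from the third to the first item I would argue by contraposition: a non-degenerate closed polygon necessarily possesses at least one vertex whose interior angle $\phi$ lies in $(0,2\pi)\backslash\{\pi\}$, and at such a vertex the polygon has no approximate $1$-tangent. By \cite[Theorem 1.1]{Scholtes2011d} the presence of a point without approximate $1$-tangent forces $\M_{p}=\infty$ as soon as $p\geq 3$; this is also the content of the similar-triangles argument of \cite[Example after Lemma 1]{Strzelecki2007a}. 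Hence $\M_{p}(P)<\infty$ excludes $p\geq 3$, and since $p>0$ by hypothesis we conclude $p\in(0,3)$.

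The estimates here are routine once the earlier lemmas are granted; the single point that deserves care --- and the step I would treat as the crux --- is this last one, where one must verify that ``non-degenerate'' really guarantees a vertex with angle in $(0,2\pi)\backslash\{\pi\}$ and that the polygon is, in a neighbourhood of that vertex, a rescaled, rotated and translated copy of some $E_{\phi}$ with $\phi\neq\pi$, so that the necessary-condition part of \cite{Scholtes2011d} genuinely applies.
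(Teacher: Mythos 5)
Your proposal is correct and takes essentially the same route as the paper: its proof of this lemma simply refers back to Lemma \ref{polygonsfiniteUp}, which combines Lemma \ref{finiteEpi/2} and Lemma \ref{energyofpolygonsdeterminedbyEpi/2} with \cite[Theorem 1.1]{Scholtes2011d} and the fact that a vertex with angle in $(0,2\pi)\backslash\{\pi\}$ admits no approximate $1$-tangent --- exactly the ingredients you use. Your write-up merely makes explicit the cyclic implications and the reduction via Lemma \ref{inequalityEpi/2} that the paper leaves implicit.
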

\begin{proof}
	See the proof of Lemma \ref{polygonsfiniteUp}.
\end{proof}

\begin{appendix}

\section{Appendix: Some remarks on integration}

In this section we give some remarks on how to get estimates for the change of variables formula.
Suppose we have a homeomorphism $g:X\to Y$ between two metric spaces and an integrand $f:X\cup Y\to\overline\R$ for which we know that
$f\leq f\circ g$ on $X$. Under which circumstances can we estimate in the following way
\begin{align*}
	\int_{X}f\dHMs_{X}\leq \int_{X}f\circ g\dHMs_{X}\leq C \int_{Y}f\dHMs_{Y}\quad ?
\end{align*}

\begin{lemma}[(Estimate for change of variables formula)]\label{estimatechangeofvariables}
	Let $(X,d_{X})$, $(Y,d_{Y})$ be metric spaces. Let $s\in (0,\infty)$, $f:Y\to \overline \R$ be $\B(Y)$--$\B(\overline \R)$ measurable, $f\geq 0$ and
	$g:X\to Y$ be a homeomorphism, with $d_{X}(g^{-1}(y_{1}),g^{-1}(y_{2}))\leq cd_{Y}(y_{1},y_{2})$ for all $y_{1},y_{2}\in Y$.
	Then
	\begin{align*}
		\int_{X}f\circ g\dHMs_{X}\leq c^{s}\int_{Y}f\dd \HM^{s}_{Y}.
	\end{align*}
\end{lemma}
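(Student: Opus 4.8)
The plan is to reduce the statement to two standard ingredients: the scaling of Hausdorff measure under Lipschitz maps and the layer-cake representation of the integral. The decisive observation is that the hypothesis says exactly that the inverse homeomorphism $h\vcentcolon= g^{-1}\colon Y\to X$ is Lipschitz with constant $c$; this is the only quantitative information that will be used.

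First I would record the Lipschitz scaling of $s$-dimensional Hausdorff measure: for every set $A\subseteq Y$ one has $\HM^{s}_{X}(h(A))\leq c^{s}\HM^{s}_{Y}(A)$. This is immediate from the covering definition of $\HM^{s}$, because a cover $\{U_{i}\}$ of $A$ with $\diam U_{i}\leq\delta$ is mapped by $h$ to a cover $\{h(U_{i})\}$ of $h(A)$ with $\diam h(U_{i})\leq c\,\diam U_{i}\leq c\delta$; hence the $c\delta$-premeasure of $h(A)$ is bounded by $c^{s}$ times the $\delta$-premeasure of $A$, and letting $\delta\to 0$ yields the claim.

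Second I would use the layer-cake (Cavalieri) formula
\begin{align*}
	\int_{X}f\circ g\dHMs_{X}=\int_{0}^{\infty}\HM^{s}_{X}(\{f\circ g>t\})\dt,
\end{align*}
together with the identity $\{f\circ g>t\}=g^{-1}(\{f>t\})=h(A_{t})$, where $A_{t}\vcentcolon=\{y\in Y:f(y)>t\}$. Combining the two steps gives
\begin{align*}
	\int_{X}f\circ g\dHMs_{X}=\int_{0}^{\infty}\HM^{s}_{X}(h(A_{t}))\dt\leq c^{s}\int_{0}^{\infty}\HM^{s}_{Y}(A_{t})\dt=c^{s}\int_{Y}f\dHMs_{Y},
\end{align*}
where the last equality is again the layer-cake formula, now on $Y$.

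The main point to check is the validity of the layer-cake formula, since the Hausdorff measure $\HM^{s}$ need not be $\sigma$-finite and so Tonelli's theorem is not directly available. I would instead prove it by approximating $f$ from below by nonnegative simple functions, for which the formula is elementary, and passing to the limit by monotone convergence; the map $t\mapsto\HM^{s}(\{f>t\})$ is monotone, hence measurable, so the outer integrals make sense. Measurability of the superlevel sets $\{f\circ g>t\}$ follows because $f$ is Borel and $g$ is continuous, so $f\circ g$ is Borel and these sets are $\HM^{s}$-measurable. No further difficulty arises.
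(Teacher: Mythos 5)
Your proof is correct, and it takes a genuinely different route from the paper's. Both arguments share the same quantitative core: your first step, $\HM^{s}_{X}(h(A))\leq c^{s}\HM^{s}_{Y}(A)$ via the covering definition, is exactly Step 1 of the paper's proof, phrased there as the pushforward estimate $g_{*}(\HM^{s}_{X})(V)\leq c^{s}\HM^{s}_{Y}(V)$. The paper then proceeds through the pushforward formalism: it proves an abstract change of variables formula $\int_{X}f\circ g\dd\V=\int_{Y}f\dd g_{*}(\V)$ (Lemma \ref{changeofvariables}) together with the identification $\mathcal{C}(g_{*}(\V))=g(\mathcal{C}(\V))$ (Lemma \ref{characterisationofmeasurablesets}), and compares the two integrals over $Y$ by simple-function approximation and monotone convergence. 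You sidestep the pushforward measure entirely by working at the level of distribution functions: the identity $\{f\circ g>t\}=h(A_{t})$ transfers the measure comparison directly into Cavalieri's formula on both spaces. You also correctly identify the one real danger point, namely that the layer-cake formula cannot be obtained from Tonelli here since $\HM^{s}$ need not be $\sigma$-finite, and your fix -- proving it by simple functions and monotone convergence, using continuity from below on the measurable superlevel sets -- is sound; note this is the same approximation device the paper deploys inside Lemma \ref{changeofvariables}, so the two proofs have comparable total weight. What each approach buys: the paper's route is more modular, as its appendix lemmas isolate reusable facts about pushforward outer measures and their Carath\'eodory $\sigma$-algebras (and the measurability bookkeeping $\B(Y)\subset\mathcal{C}(\HM^{s}_{Y})$, $\B(Y)\subset\mathcal{C}(g_{*}(\HM^{s}_{X}))$ is made explicit there); your route is more self-contained and avoids introducing $g_{*}(\HM^{s}_{X})$ altogether, at the cost of re-proving Cavalieri in the outer-measure setting. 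Your measurability remarks (monotonicity of $t\mapsto\HM^{s}(\{f>t\})$, Borel measurability of $f\circ g$ from continuity of $g$, and Carath\'eodory measurability of Borel sets for the metric outer measure $\HM^{s}$) close all the gaps that matter.
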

\begin{proof}
	\textbf{Step 1}
		Let $V\subset Y$ and $(V_{n})_{n\in\N}$ be a $\delta$ covering of $V$. Then $U_{n}=g^{-1}(V_{n})$ 
		cover $U=g^{-1}(V)$ with
		\begin{align*}
			\diam(g^{-1}(V_{n}))\leq c\diam(V_{n})\leq c\delta.
		\end{align*}
		Consequently we have $g_{*}(\HM^{s}_{X})(V)=\HM^{s}_{X}(g^{-1}(V))\leq c^{s}\HM^{s}_{Y}(V)$.\\
	\textbf{Step 2}
		As $f\geq 0$ is Borel measurable, i.e. $\B(Y)$--$\B(\overline\R)$ measurable, Lemma \ref{approximationofmeasurablefunctions}
		gives us non-negative Borel measurable simple functions 
		$u_{n}:Y\to \overline\R$, $u_{n}\uparrow f$. According to the Monotone Convergence Theorem this gives us
		\begin{align*}
			\int_{Y}f\dd g_{*}(\HM^{s}_{X})=\lim_{n\to\infty}\int_{Y}u_{n}\dd g_{*}(\HM^{s}_{X})
			\leq \lim_{n\to\infty}\int_{Y}c^{s}u_{n}\dd\HM^{s}_{Y}=c^{s}\int_{Y}f\dd \HM^{s}_{Y}.
		\end{align*}
		The previous estimate and use of Monotone Convergence Theorem is only justified, because
		\begin{align*}
			\B(Y)\subset \mathcal{C}(\HM^{s}_{Y})\quad\text{and}\quad
			\B(Y)\subset g(\mathcal{C}(\HM^{s}_{X}))=\mathcal{C}(g_{*}(\HM^{s}_{X}))			
		\end{align*}
		by Lemma \ref{characterisationofmeasurablesets} together with the fact that $g$ is a
		homeomorphism and hence maps $\B(X)$ onto $\B(Y)$.\\
	\textbf{Step 3}
		Now we can use Lemma \ref{changeofvariables} to write
		\begin{align*}
			\int_{X}f\circ g\dHMs_{X}=\int_{Y}f\dd g_{*}(\HM^{s}_{X})\leq c^{s}\int_{Y}f\dd \HM^{s}_{Y}.
		\end{align*}
\end{proof}

\begin{lemma}[(Estimate for change of variables formula in multiple integrals)]\label{estimatechangeofvariablesmultiple}
	Let $(X,d_{X})$, $(Y,d_{Y})$ be metric spaces. Let $s\in (0,\infty)$, $f:Y^{n}\to \overline \R$ be lower semi-continuous, $f\geq 0$ and
	$g:X\to Y$ be a homeomorphism, with $d_{X}(g^{-1}(y_{1}),g^{-1}(y_{2}))\leq cd_{Y}(y_{1},y_{2})$ for all $y_{1},y_{2}\in Y$.
	Then
	\begin{align*}
		\MoveEqLeft
		\int_{X}\ldots\int_{X}f(g(x_{1}),\ldots,g(x_{n}))\dHMs_{X}(x_{1})\ldots \dHMs_{X}(x_{n})\\
		&\leq c^{sn}\int_{Y}\ldots\int_{Y}f(y_{1},\ldots,y_{n})\dd\HM^{s}_{Y}(y_{1})\ldots\dd\HM^{s}_{Y}(y_{n}).
	\end{align*}
\end{lemma}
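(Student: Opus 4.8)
The plan is to argue by induction on $n$, at each step reducing to the single-variable statement of Lemma \ref{estimatechangeofvariables}. The base case $n=1$ is exactly that lemma, so only the inductive step requires work.

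First I would strip off the innermost integration, in the variable $x_{1}$. For fixed $x_{2},\ldots,x_{n}$ the section
\begin{align*}
	y_{1}\mapsto f(y_{1},g(x_{2}),\ldots,g(x_{n}))
\end{align*}
is the composition of the lower semi-continuous $f$ with the continuous embedding $y_{1}\mapsto (y_{1},g(x_{2}),\ldots,g(x_{n}))$, hence is itself lower semi-continuous, in particular nonnegative and $\B(Y)$--$\B(\overline\R)$ measurable. Lemma \ref{estimatechangeofvariables} therefore applies to this section and gives
\begin{align*}
	\int_{X}f(g(x_{1}),\ldots,g(x_{n}))\dHMs_{X}(x_{1})
	\leq c^{s}\int_{Y}f(y_{1},g(x_{2}),\ldots,g(x_{n}))\dd\HM^{s}_{Y}(y_{1})
	=c^{s}F(g(x_{2}),\ldots,g(x_{n})),
\end{align*}
where I abbreviate $F(y_{2},\ldots,y_{n})\vcentcolon=\int_{Y}f(y_{1},y_{2},\ldots,y_{n})\dd\HM^{s}_{Y}(y_{1})$.

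The step I expect to be the main obstacle is to verify that $F$ is again lower semi-continuous on $Y^{n-1}$, for only then may the integrand $F\circ(g\times\cdots\times g)$ be fed into the inductive hypothesis. This is precisely where the lower semi-continuity of $f$ is needed rather than mere Borel measurability: for a merely Borel $f$ one would have to invoke a Fubini-type theorem together with $\sigma$-finiteness even to guarantee measurability of the partial integral $F$, whereas lower semi-continuity passes through integration by Fatou's lemma. Concretely, for any sequence $(y_{2}^{(k)},\ldots,y_{n}^{(k)})\to (y_{2},\ldots,y_{n})$ the lower semi-continuity of $f$ yields $\liminf_{k}f(y_{1},y_{2}^{(k)},\ldots,y_{n}^{(k)})\geq f(y_{1},y_{2},\ldots,y_{n})$ pointwise in $y_{1}$, and Fatou's lemma then gives $\liminf_{k}F(y_{2}^{(k)},\ldots,y_{n}^{(k)})\geq F(y_{2},\ldots,y_{n})$, that is, $F$ is lower semi-continuous.

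With $F$ lower semi-continuous and nonnegative and the same bi-Lipschitz $g$ at hand, the inductive hypothesis applied to the $(n-1)$-fold iterated integral of $F\circ(g\times\cdots\times g)$ contributes a factor $c^{s(n-1)}$. Since $\int_{Y}\cdots\int_{Y}F(y_{2},\ldots,y_{n})\dd\HM^{s}_{Y}(y_{2})\cdots\dd\HM^{s}_{Y}(y_{n})$ is by the definition of $F$ the full $n$-fold iterated integral of $f$ over $Y$, combining the two bounds multiplies the constants to $c^{s}\cdot c^{s(n-1)}=c^{sn}$, which is the claimed estimate and closes the induction.
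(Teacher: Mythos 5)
Your proof is correct and follows essentially the same route as the paper: the paper's proof likewise combines Lemma \ref{estimatechangeofvariables} applied one variable at a time (your induction is just the paper's ``successive'' application unrolled) with the same Fatou-lemma argument showing that partial integrals of a lower semi-continuous integrand remain lower semi-continuous, hence Borel measurable. Your observation that mere Borel measurability of $f$ would not suffice matches the paper's explicit warning to the same effect.
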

\begin{proof}
	\textbf{Step 1}
		For fixed $v_{1},\ldots,v_{n}\in Y$ and $a_{k},a\in Y$ with $a_{n}\to a$ we have 
		\begin{align*}
			f(v_{1},\ldots,v_{l-1},a,v_{l+1},\ldots,v_{n})\leq \liminf_{k\to\infty}f(v_{1},\ldots,v_{l-1},a_{k},v_{l+1},\ldots,v_{n})
		\end{align*}
		and hence by Fatou's Lemma
		\begin{align*}
			\MoveEqLeft
			\int_{Y}f(y_{1},v_{2}\ldots,v_{l-1},a,v_{l+1},\ldots,v_{n})\dHMs(y_{1})\\ 
			&\leq\int_{Y}\liminf_{k\to\infty}f(y_{1},v_{2},\ldots,v_{l-1},a_{k},v_{l+1},\ldots,v_{n})\dHMs(y_{1})\\
			&\leq \liminf_{k\to\infty}\int_{Y}f(y_{1},v_{2},\ldots,v_{l-1},a_{k},v_{l+1},\ldots,v_{n})\dHMs(y_{1}),
		\end{align*}
		so that $y\mapsto \int_{Y}f(y_{1},v_{2},\ldots,v_{l-1},y,v_{l+1},\ldots,v_{n})\dHMs(y_{1})$ is lower semi-continuous.
		Hence
		\begin{align*}
			\MoveEqLeft
			\int_{Y}\int_{Y}f(y_{1},y_{2},v_{3},\ldots,v_{l-1},a,v_{l+1}\ldots,v_{n})\dHMs(y_{1})\dHMs(y_{2})\\
			&\leq \int_{Y}\liminf_{k\to\infty}\int_{Y}f(y_{1},y_{2},v_{3},\ldots,v_{l-1},a_{k},v_{l+1}\ldots,v_{n})\dHMs(y_{1})\dHMs(y_{2})\\
			&\leq \liminf_{k\to\infty}\int_{Y}\int_{Y}f(y_{1},y_{2},v_{3},\ldots,v_{l-1},a_{k},v_{l+1}\ldots,v_{n})\dHMs(y_{1})\dHMs(y_{2})
		\end{align*}
		and by a straightforward inductive argument we can show that for all $l\in\{2,\ldots,n\}$ the mappings
		\begin{align*}
			Y\to\overline\R,\, y\mapsto \int_{Y}\ldots\int_{Y}f(y_{1},\ldots,y_{l-1},y,v_{l+1},\ldots,v_{n})\dHMs(y_{1})\ldots\dHMs(y_{l-1})
		\end{align*}
		are lower semi-continuous for all $v_{1},\ldots,v_{n}\in Y$ and hence also $\B(Y)$--$\B(\overline\R)$ measurable.\\
	\textbf{Step 2}
		Now we can successively use Lemma \ref{estimatechangeofvariables} to obtain
		\begin{align*}
			\MoveEqLeft
			\int_{X}\ldots\int_{X}f(g(x_{1}),\ldots,g(x_{n}))\dHMs_{X}(x_{1})\ldots \dHMs_{X}(x_{n})\\
			&\leq \int_{X}\ldots\int_{X}c^{s}\int_{Y}f(y_{1},g(x_{2})\ldots,g(x_{n}))\dHMs_{Y}(y_{1})\dHMs_{X}(x_{2})\ldots \dHMs_{X}(x_{n})\\
			&\leq \ldots\leq c^{sn}\int_{Y}\ldots\int_{Y}f(y_{1},\ldots,y_{n})\dd\HM^{s}_{Y}(y_{1})\ldots\dd\HM^{s}_{Y}(y_{n}).
		\end{align*}
\end{proof}

\begin{warning}[(For L. \ref{estimatechangeofvariablesmultiple} the hypothesis $f$ Borel measurable is not enough)]
	For the argument used in the proof of Lemma \ref{estimatechangeofvariablesmultiple} it would not suffice to have $f:Y^{n}\to\overline\R$
	Borel measurable, because then we would not be able to show that $f(\cdot,v_{2},\ldots,v_{n}):Y\to\overline\R$ is Borel measurable
	-- as Suslin showed that there are Borel sets, whose projections are not Borel sets --
	which was a hypothesis of Lemma \ref{estimatechangeofvariables}.
\end{warning}

\begin{lemma}[(Approximation of measurable functions with simple functions)]\label{approximationofmeasurablefunctions}
	Let $(X,\mathcal{A})$ be a measurable space, $f:(X,\mathcal{A})\to (\overline \R,\B(\overline \R))$, $f\geq 0$. 
	Then $f$ is measurable if and only if there is a sequence of simple, non-negative, measurable functions
	$u_{n}:(X,\mathcal{A})\to (\overline \R,\B(\overline \R))$, with $u_{n}\uparrow f$.
\end{lemma}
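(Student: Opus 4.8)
The plan is to prove the two implications separately; the backward direction is immediate, and the forward direction rests on the classical dyadic quantisation.

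For the backward direction, suppose simple, non-negative, measurable functions $u_n$ with $u_n\uparrow f$ are given. Then $f=\sup_{n\in\N}u_n$ pointwise, and since for every $a\in\R$ we have $\{f>a\}=\bigcup_{n\in\N}\{u_n>a\}\in\mathcal{A}$, the pointwise supremum $f$ is again $\mathcal{A}$--$\B(\overline\R)$ measurable.

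For the forward direction, assume $f$ is measurable with $f\geq 0$, so that $f$ takes values in $[0,\infty]$. The idea is to realise the approximants as $u_n\vcentcolon=\psi_n\circ f$ for a fixed sequence of Borel functions $\psi_n\colon[0,\infty]\to[0,\infty)$ that quantise and truncate their argument. Concretely I would set, for $t\in[0,\infty)$,
\[
	\psi_n(t)\vcentcolon=\min\Big\{2^{-n}\lfloor 2^{n}t\rfloor,\ n\Big\},\qquad \psi_n(\infty)\vcentcolon=n,
\]
so that each $u_n$ attains only the finitely many values $0,2^{-n},2\cdot 2^{-n},\ldots,n$, i.e.\ is simple. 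Measurability of $u_n$ follows since $\psi_n$ is Borel -- its level sets are finite unions of half-open dyadic intervals, possibly together with $\{\infty\}$ -- and the composition of a Borel function with the measurable $f$ is measurable; equivalently, each preimage $f^{-1}\big([(k-1)2^{-n},k2^{-n})\big)$ and $f^{-1}([n,\infty])$ lies in $\mathcal{A}$. Non-negativity is clear.

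It then remains to check monotonicity $u_n\leq u_{n+1}$ and convergence $u_n\uparrow f$, and both reduce to the pointwise statements $\psi_n\leq\psi_{n+1}$ and $\psi_n(t)\uparrow t$ on $[0,\infty]$. Monotonicity follows from the dyadic refinement inequality $2^{-n}\lfloor 2^{n}t\rfloor\leq 2^{-(n+1)}\lfloor 2^{n+1}t\rfloor$ together with $n\leq n+1$, both of which are preserved under taking the minimum. For convergence one notes that for finite $t$ and $n>t$ one has $0\leq t-\psi_n(t)<2^{-n}\to 0$, while $\psi_n(\infty)=n\to\infty$ handles the value $+\infty$. The only point requiring a little care -- the mild obstacle here -- is the bookkeeping around $+\infty$: one must ensure $\psi_n$ stays Borel on the extended half-line and that letting the truncation level $n\to\infty$ recovers $f$ exactly on $\{f=\infty\}$; everything else is routine.
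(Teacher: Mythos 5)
Your proof is correct: both directions are handled properly, including the dyadic refinement inequality $2^{-n}\lfloor 2^{n}t\rfloor\leq 2^{-(n+1)}\lfloor 2^{n+1}t\rfloor$, the truncation at level $n$, and the bookkeeping on $\{f=\infty\}$. The paper does not prove this lemma itself but merely cites Elstrodt (Satz 4.13), and your dyadic quantisation argument is exactly the standard proof behind that citation, so you have simply supplied the details the paper outsources.
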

\begin{proof}
	\cite[III \S 4 Satz 4.13, p.108]{Elstrodt2005a}
\end{proof}

\begin{lemma}[(Change of variables)]\label{changeofvariables}
	Let $\V$ be a Borel regular outer measure on $X$, $Y$ be a set and $g:X\to Y$ a 
	bijective map.
	Further let $f:(Y,\mathcal{C}(g_{*}(\V)))\to (\overline\R,\B(\overline\R))$ measurable, $f\geq 0$. Then
	\begin{align}\label{changeofvariablesformula}
		\int_{Y}f\dd g_{*}(\V)=\int_{X}f\circ g\dd \V.
	\end{align}
\end{lemma}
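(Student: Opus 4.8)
The plan is to follow the standard measure-theoretic bootstrap: establish \eqref{changeofvariablesformula} first for indicator functions, extend it by linearity to non-negative simple measurable functions, and finally pass to arbitrary non-negative measurable $f$ by monotone approximation. The one genuinely structural ingredient is the correspondence between $g_*(\V)$-measurable and $\V$-measurable sets, which is what makes each of these steps well posed.

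First I would record that, since $g$ is a bijection, taking preimages commutes with complements, intersections and arbitrary unions, and $g^{-1}$ maps the family of all subsets of $Y$ bijectively onto the family of all subsets of $X$. Writing out the Carathéodory criterion for $g_*(\V)$ and substituting $g_*(\V)(T)=\V(g^{-1}(T))$, $g_*(\V)(T\cap A)=\V(g^{-1}(T)\cap g^{-1}(A))$ and $g_*(\V)(T\setminus A)=\V(g^{-1}(T)\setminus g^{-1}(A))$, one sees that $A$ splits every test set $T\subset Y$ additively for $g_*(\V)$ if and only if $g^{-1}(A)$ splits every test set $S=g^{-1}(T)\subset X$ additively for $\V$. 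Hence
\begin{align*}
	A\in\mathcal{C}(g_*(\V))\quad\Longleftrightarrow\quad g^{-1}(A)\in\mathcal{C}(\V),
\end{align*}
so in particular $f\circ g$ is $\mathcal{C}(\V)$--$\B(\overline\R)$ measurable whenever $f$ is $\mathcal{C}(g_*(\V))$--$\B(\overline\R)$ measurable, because $(f\circ g)^{-1}(B)=g^{-1}(f^{-1}(B))$.

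For $f=\1_{A}$ with $A\in\mathcal{C}(g_*(\V))$ the formula is immediate: one has $\1_{A}\circ g=\1_{g^{-1}(A)}$, and by the identities above both sides equal $g_*(\V)(A)=\V(g^{-1}(A))$, the right-hand integral being legitimate precisely because $g^{-1}(A)\in\mathcal{C}(\V)$. By linearity of the integral over finite non-negative combinations, \eqref{changeofvariablesformula} then holds for every non-negative simple $\mathcal{C}(g_*(\V))$--$\B(\overline\R)$ measurable function. For general measurable $f\geq 0$, Lemma \ref{approximationofmeasurablefunctions} supplies simple non-negative measurable $u_{n}\uparrow f$; then $u_{n}\circ g\uparrow f\circ g$, each $u_{n}\circ g$ being measurable by the correspondence above, and two applications of the Monotone Convergence Theorem yield
\begin{align*}
	\int_{Y}f\dd g_*(\V)=\lim_{n\to\infty}\int_{Y}u_{n}\dd g_*(\V)=\lim_{n\to\infty}\int_{X}u_{n}\circ g\dd\V=\int_{X}f\circ g\dd\V.
\end{align*}

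I expect the only real obstacle to be the measurability bookkeeping rather than the integration itself: one must check that $g_*(\V)$ is again an outer measure (so that Carathéodory measurability and the Monotone Convergence Theorem are available on $Y$) and that the measurability of $f$ transfers to $f\circ g$ on the $\V$ side, both of which rest on the bijectivity identity established at the outset. The Borel regularity of $\V$ is not needed for the argument proper; it merely ensures that $g_*(\V)$ is again a genuine Borel regular outer measure in the ambient setting in which the lemma is later applied.
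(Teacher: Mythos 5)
Your proposal is correct and takes essentially the same route as the paper's proof: indicators, then simple functions by linearity, then Monotone Convergence via Lemma \ref{approximationofmeasurablefunctions}, with the measurability transfer resting on the correspondence $A\in\mathcal{C}(g_{*}(\V))\Leftrightarrow g^{-1}(A)\in\mathcal{C}(\V)$, which the paper isolates as Lemma \ref{characterisationofmeasurablesets} and which your inline Carath\'eodory-criterion substitution reproves verbatim. Your closing observation that the Borel regularity of $\V$ is not actually used in the argument itself is likewise consistent with the paper's proof, which invokes only the outer-measure structure and the measurable-set correspondence.
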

\begin{proof}
	As we have a setting that the reader might find to be slightly confusing, we will proof this lemma. It is essentially
	the proof that can be found in \cite[V \S 3 3.1, p.191]{Elstrodt2005a}.\\
	\textbf{Step 1}
		Let $h:(Y,\mathcal{C}(g_{*}(\V)))\to (\overline\R,\B(\overline\R))$ be measurable and $B\in\B(\overline\R)$. Then
		\begin{align*}
			(h\circ g)^{-1}(B)=g^{-1}(\underbrace{h^{-1}(B)}_{\mathclap{\in \mathcal{C}(g_{*}(\V))
			\stackrel{\text{L. \ref{characterisationofmeasurablesets}}}{=}g(\mathcal{C}(\V))}})
			\in \mathcal{C}(\V),
		\end{align*}	
		so that $h\circ g$ is $\mathcal{C}(\V)$--$\B(\overline\R)$ measurable.\\
	\textbf{Step 2}
		For all $E\in\mathcal{C}(g_{*}(\V))$,
		i.e. $g^{-1}(E)\in \mathcal{C}(\V)$ by Lemma \ref{characterisationofmeasurablesets}, we have
		\begin{align*}
			\int_{Y}\chi_{E}\dd g_{*}(\V)=\V(g^{-1}(E))=\int_{X}\chi_{g^{-1}(E)}\dd\V
			=\int_{X}\chi_{E}\circ g\dd\V,
		\end{align*}
		because $\chi_{E}\circ g$ is $\mathcal{C}(\V)$--$\B(\overline\R)$ measurable by Step 1.
		Consequently we have the change of variables formula (\ref{changeofvariablesformula}) with $u$ instead of $f$, 
		for all simple, non-negative, measurable functions $u:(X,\mathcal{C}(g_{*}(\V)))\to (\overline \R,\B(\overline \R))$.\\
	\textbf{Step 3}
		As $f\geq 0$ is $\mathcal{C}(g_{*}(\V))$--$\B(\overline\R)$ measurable we know from Lemma \ref{approximationofmeasurablefunctions},
		that there is a sequence of simple, non-negative, measurable functions	
		$u_{n}:(X,\mathcal{C}(g_{*}(\V)))\to (\overline \R,\B(\overline \R))$, with $u_{n}\uparrow f$. By the Monotone Convergence Theorem
		\cite[1.3, Theorem 2, p.20]{Evans1992a} together with Step 2 we obtain
		\begin{align*}
			\int_{Y}f\dd g_{*}(\V)=\lim_{n\to\infty}\int_{Y}u_{n}\dd g_{*}(\V)
			=\lim_{n\to\infty}\int_{X}u_{n}\circ g\dd\V
			=\int_{X}f\circ g\dd\V,
		\end{align*}
		as $u_{n}\circ g$ are simple, non-negative $\mathcal{C}(\V)$--$\B(\overline\R)$ measurable functions
		with $u_{n}\circ g\uparrow f\circ g$.
\end{proof}

\begin{lemma}[(What is $\mathcal{C}(g_{*}(\V))$?)]\label{characterisationofmeasurablesets}
	Let $\V$ be an outer measure on $X$, $Y$ be a set and $g:X\to Y$ a bijective map. Then
	\begin{align*}
		\mathcal{C}(g_{*}(\V))=g(\mathcal{C}(\V)).
	\end{align*}
\end{lemma}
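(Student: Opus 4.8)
The plan is to reduce the claimed set equality to the single equivalence
\[
	B\in\mathcal{C}(g_{*}(\V))\iff g^{-1}(B)\in\mathcal{C}(\V),
\]
and then to read off both inclusions using the bijectivity of $g$. First I would recall the Carathéodory criterion in both spaces: a set $B\subseteq Y$ lies in $\mathcal{C}(g_{*}(\V))$ precisely when $g_{*}(\V)(S)=g_{*}(\V)(S\cap B)+g_{*}(\V)(S\setminus B)$ for every test set $S\subseteq Y$, and a set $A\subseteq X$ lies in $\mathcal{C}(\V)$ precisely when the analogous identity holds for all test sets $T\subseteq X$. Implicit here is that $g_{*}(\V)$ is genuinely an outer measure, so that $\mathcal{C}(g_{*}(\V))$ is even defined; this transfers from $\V$ at once, since $g^{-1}$ preserves inclusions and countable unions, yielding monotonicity and countable subadditivity of $g_{*}(\V)$.

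The key computation is to substitute the definition $g_{*}(\V)(\,\cdot\,)=\V(g^{-1}(\,\cdot\,))$ and use that preimages commute with intersection and set difference, i.e. $g^{-1}(S\cap B)=g^{-1}(S)\cap g^{-1}(B)$ and $g^{-1}(S\setminus B)=g^{-1}(S)\setminus g^{-1}(B)$, which hold for an arbitrary map. The measurability condition for $B$ then reads
\[
	\V\bigl(g^{-1}(S)\bigr)=\V\bigl(g^{-1}(S)\cap g^{-1}(B)\bigr)+\V\bigl(g^{-1}(S)\setminus g^{-1}(B)\bigr)\quad\text{for all }S\subseteq Y.
\]
At this point bijectivity enters: as $S$ ranges over all subsets of $Y$, the set $T\vcentcolon=g^{-1}(S)$ ranges over \emph{all} subsets of $X$, because for any $T\subseteq X$ the choice $S=g(T)$ gives $g^{-1}(S)=T$ by injectivity of $g$. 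Consequently the displayed condition is equivalent to demanding that $g^{-1}(B)$ satisfy the Carathéodory criterion with respect to $\V$, that is $g^{-1}(B)\in\mathcal{C}(\V)$, which establishes the equivalence above.

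With the equivalence in hand the two inclusions are immediate. If $B\in\mathcal{C}(g_{*}(\V))$, then $g^{-1}(B)\in\mathcal{C}(\V)$ and $B=g\bigl(g^{-1}(B)\bigr)$ by surjectivity, so $B\in g(\mathcal{C}(\V))$. Conversely, if $B=g(A)$ with $A\in\mathcal{C}(\V)$, then $g^{-1}(B)=g^{-1}(g(A))=A\in\mathcal{C}(\V)$ by injectivity, whence $B\in\mathcal{C}(g_{*}(\V))$.

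I do not expect a genuine obstacle here; the proof is essentially bookkeeping around the Carathéodory criterion. The one point that demands care is tracking precisely where injectivity versus surjectivity of $g$ is invoked — injectivity both to pass from test sets $S\subseteq Y$ to arbitrary test sets $T\subseteq X$ and to recover $A$ from $g(A)$, and surjectivity to recover $B$ from $g^{-1}(B)$ — so that the argument does not tacitly assume more than the stated bijectivity.
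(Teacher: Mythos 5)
Your proof is correct and is essentially the paper's own argument: both sides verify the Carath\'eodory criterion transported through the bijection $g$, using that $S\mapsto g^{-1}(S)$ exhausts all test sets of $X$, and your single equivalence $B\in\mathcal{C}(g_{*}(\V))\iff g^{-1}(B)\in\mathcal{C}(\V)$ merely repackages the paper's two separate inclusions (its Steps 1 and 2). If anything, your bookkeeping is the more careful: you keep the test set and the candidate measurable set in their proper roles throughout, whereas the paper's displayed chains (e.g.\ $g_{*}(\V)(E)=g_{*}(\V)(E\cap g(U))+g_{*}(\V)(E\backslash g(U))$ with $E$ the measurable set) have these two roles interchanged relative to the standard criterion, which your formulation silently corrects.
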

\begin{proof}
	\textbf{Step 1}
		Let $E\in \mathcal{C}(g_{*}(\V))$ and $U\subset X$. Then
		\begin{align*}
			\MoveEqLeft
			\V(g^{-1}(E))=g_{*}(\V)(E)=g_{*}(\V)(E\cap g(U))+g_{*}(\V)(E\backslash g(U))\\
			&=\V(g^{-1}(E\cap g(U)))+\V(g^{-1}(E\backslash g(U)))
			=\V(g^{-1}(E)\cap U))+\V(g^{-1}(E)\backslash U)),
		\end{align*}
		so that $g^{-1}(E)\in \mathcal{C}(V)$ and hence $E\in g(\mathcal{C}(\V))$.\\
	\textbf{Step 2}
		Let $E\in g(\mathcal{C}(\V))$ and $V\subset Y$. Then
		\begin{align*}
			\MoveEqLeft
			g_{*}(\V)(E)=\V(g^{-1}(E))=\V(g^{-1}(E)\cap g^{-1}(V))+\V(g^{-1}(E)\backslash g^{-1}(V))\\
			&=\V(g^{-1}(E\cap V))+\V(g^{-1}(E\backslash V))=g_{*}(\V)(E\cap V)+g_{*}(\V)(E\backslash V),
		\end{align*}
		which gives us $E\in  \mathcal{C}(g_{*}(\V))$.
\end{proof}

\end{appendix}

\bibliography{smalllibrary.bib}{}
\bibliographystyle{amsalpha}
\bigskip
\noindent
\parbox[t]{.8\textwidth}{
Sebastian Scholtes\\
Institut f{\"u}r Mathematik\\
RWTH Aachen University\\
Templergraben 55\\
D--52062 Aachen, Germany\\
sebastian.scholtes@rwth-aachen.de}

\end{document}